\newcommand{\NOTESON}{0}
\newcommand{\Debug}{0}
\newcommand{\COLORON}{0}
\newcommand{\comment}[1]{}
\newcommand{\COMMENT}[1]{}
\newcommand{\defi}[1]{{\color{Green}\emph{#1\/}}}
\newtheorem{proposition}{Proposition}[section]
\newtheorem{theorem}[proposition]{Theorem}
\newtheorem{corollary}[proposition]{Corollary}
\newtheorem{conjecture}{{\color{red}Conjecture}}[section]
\newtheorem{problem}[conjecture]{{\color{red}Problem}}
\newtheorem{examp}{Example}[section]
\newcommand{\FIG}{0}
\newcommand{\note}[1]{ 

	\ 

	{\color{Blue} NOTE: \color{Turquoise}{\small  \tt \begin{minipage}[c]{0.8\textwidth}  #1 \end{minipage} \ignorespacesafterend }} 
	
	\ 
	
	}
\else \newcommand{\note}[1]{} \fi
\newcommand{\fig}[1]{Figure ``{#1}''}
\else \newcommand{\fig}[1]{Figure~\ref{#1}} \fi
\renewcommand{\color}[1]{}
\newcommand{\showFigR}[3]{
   \begin{figure}[!hbt]
   \centering
   \noindent
   \includegraphics[width=#3]{#1}
   \caption{\small #2}
   \label{#1}
   \end{figure}
}
\newcommand{\showFig}[2]{
   \begin{figure}[htbp]
   \centering
   \noindent
   \epsfbox{#1.eps}
   \caption{\small #2}
   \label{#1}
   \end{figure}
}
\newcommand{\N}{\mathbb N}
\newcommand{\sm}{\backslash}
\newcommand{\nin}{\ensuremath{{n\in\N}}}
\newcommand{\pths}[2]{\ensuremath{#1}\text{--}\ensuremath{#2}~paths}
\newcommand{\Tr}[1]{Theorem~\ref{#1}}
\newcommand{\Sr}[1]{Section~\ref{#1}}
\newcommand{\obda}{without loss of generality}
\newcommand{\fe}{for every}
\newcommand{\ti}{there is}
\newcommand{\labtequ}[2]{ \begin{equation} \label{#1} 	\begin{minipage}[c]{0.9\textwidth}  #2 \end{minipage} \ignorespacesafterend \end{equation} } 
\newcommand{\mySection}[2]{}
\title{An Eberhard-like theorem for pentagons and heptagons}
\author{Matt DeVos \\
  {Department of Mathematics}\\
  {Simon Fraser University}\\
  {Burnaby, B.C. V5A 1S6} \\
  {\tt mdevos@sfu.ca}
\and
  Agelos Georgakopoulos\thanks{Supported in part by a GIF grant and in part by an FWF grant.}~\thanks{This work was conceived during a visit of the second author at the Simon Fraser University; we thank the SFU for supporting this visit.}\\
  {Technische Universit\"at Graz}\\
  {Steyrergasse 30, 8010}\\
  {Graz, Austria}\\
  {\tt georgakopoulos@tugraz.at}
\and
  Bojan Mohar\thanks{Supported in part by the
  Research Grant P1--0297 of ARRS, by an NSERC Discovery Grant
  and by the Canada Research Chairs program.}~\thanks{On leave from:
  IMFM \& FMF, Department of Mathematics, University of Ljubljana, Ljubljana,
  Slovenia.}\\
  {Department of Mathematics}\\
  {Simon Fraser University}\\
  {Burnaby, B.C. V5A 1S6} \\
  {\tt mohar@sfu.ca}
\and
   Robert \v{S}\'{a}mal\thanks{Supported in part by PIMS postdoctoral fellowship 
   at the Simon Fraser University, by Institute for Theoretical Computer Science 
   (ITI) and by Department of Applied Mathematics.
   Partially supported by grant GA \v{C}R P201/10/P337.}\\
   KAM \& ITI\\
   Charles University\\
   Prague, Czech Republic\\
   {\tt samal@kam.mff.cuni.cz}
}
\date{}
\begin{document}
\maketitle

\begin{abstract}
Eberhard proved that for every sequence $(p_k), 3\le k\le r, k\ne 6$
of non-negative integers satisfying Euler's formula $\sum_{k\ge3} (6-k) p_k = 12$,
there are infinitely many values $p_6$ such that there exists a simple 
convex polyhedron having precisely $p_k$ faces of size $k$ for every 
$k\ge3$, where $p_k=0$ if $k>r$.
In this paper we prove a similar statement when non-negative integers
$p_k$ are given for $3\le k\le r$, except for $k=5$ and $k=7$ (but including $p_6$). We prove that 
there are infinitely many values $p_5,p_7$ such that there
exists a simple convex polyhedron having precisely $p_k$ faces of size 
$k$ for every $k\ge3$.
We derive an extension to arbitrary closed surfaces, yielding maps of arbitrarily high face-width. Our proof suggests a general method for obtaining 
results of this kind.
\end{abstract}

\section{Introduction}

Consider a cubic (i.e., 3-regular) plane graph, and let $p_k$ ($k\ge 1$) denote the number of its
$k$-gonal faces. It is a simple corollary of Euler's formula that
\begin{equation}
  \sum_{k\ge1} (6-k) p_k = 12.
\label{eq:basic}
\end{equation}
It is natural to ask for which sequences $(p_k)_{k\ge1}$ satisfying (\ref{eq:basic})
there exists a cubic plane graph whose face sizes comply with the sequence $(p_k)$.
This question is even more interesting when additional restrictions on the graph are
given. The most important case is to consider graphs of 3-dimensional convex polyhedra,
so called {\em polyhedral graphs\/}.
By Steinitz's Theorem, this is the same as requiring the graphs to be 3-connected. An important subcase is when the polyhedra are \defi{simple}, in other words, when the corresponding graphs are cubic.

The general problem about the existence of polyhedral graphs with given face sizes is still wide open. 
However, there are many special cases that have been solved. 
For example \cite[Theorem 13.4.1]{Gru1}, it is known that there exists a simple
polyhedron with six quadrangular faces and $p_6$ faces of size six if and only
if $p_6\ne1$; and there exists a simple polyhedron with twelve pentagonal faces 
and $p_6$ faces of size six (a ``fullerene'' graph) if and only if $p_6\ne1$.
A similar case of four triangular faces and $p_6$ faces of size 6 has infinitely
many exceptions: such a polyhedron exists if and only if $p_6$ is even.
We refer to \cite{Gru1} 
for a complete overview. The most fundamental result in this area 
is the following classical theorem of Eberhard \cite{Eb}, stating that there is always 
a solution provided we are allowed to replace $p_6$ (whose value does not affect the satisfaction of~\eqref{eq:basic}) by a large enough integer.

\begin{theorem}[Eberhard \cite{Eb}]
\label{thm:Eberhard}
For every sequence $(p_k), 3\le k\le r, k\ne 6$ of non-negative integers
satisfying\/ $(\ref{eq:basic})$, there are infinitely many values $p_6$ such that there
exists a simple convex polyhedron having precisely $p_k$ faces of size $k$ for every 
$k\ge3$, where $p_k=0$ if $k>r$.
\end{theorem}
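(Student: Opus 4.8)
The plan is to prove Eberhard's theorem constructively, building the desired polyhedron by starting from a convenient ``core'' gadget that realizes the prescribed non-hexagonal faces and then inflating it with hexagons to repair 3-connectivity and simplicity. First I would reduce to the case where all prescribed $p_k$ with $k\ne 6$ satisfy $3\le k\le r$ and observe that we are free to add as many hexagons as we like; the subtlety of the statement is precisely that $p_6$ is not under our control, only its cofinality. So the real target is: produce \emph{some} simple 3-connected plane (equivalently, polyhedral) graph with the prescribed small-face counts and an arbitrarily large number of hexagons, then show that once one value of $p_6$ works, infinitely many do.

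The key steps, in order, are as follows. (1) \emph{Infinitely many from one.} Exhibit a local ``hexagon insertion'' operation that takes a cubic plane graph and replaces a single face (or a small patch) by a larger patch containing extra hexagons and nothing else, preserving cubicity and 3-connectivity; iterating this shows that if $p_6$ is attainable then so is $p_6 + c$ for a fixed $c$ (and hence $p_6 + ic$ for all $i$), which yields the ``infinitely many values'' conclusion. A standard such move: pick a hexagonal face, or subdivide along a suitable cycle and paste in an annulus of hexagons; one can also use the classical trick of taking the ``hexagonal refinement'' that multiplies all face sizes appropriately, but since that changes the non-hexagon counts one must instead localize it. (2) \emph{Realizing the non-hexagonal faces.} Treating the multiset of prescribed faces, build a cubic plane \emph{near-triangulation of a disk} — a patch $P$ — whose bounded faces are exactly the prescribed non-hexagonal faces together with some hexagons, and whose boundary is a single cycle all of whose interior vertices have degree $3$; Euler's formula $(\ref{eq:basic})$ is exactly the bookkeeping identity that makes the curvature add up so that such a disk can be closed off. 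Concretely, group the faces with $k<6$ (positive curvature) and $k>6$ (negative curvature), lay them out so the total combinatorial curvature is $12$ (i.e.\ two units of a sphere's worth split as needed), and fill gaps with hexagons. (3) \emph{Closing the disk into a sphere.} Take two copies of a suitable patch, or one patch and a matching ``cap,'' and glue along the boundary cycle to obtain a cubic plane graph; hexagonal boundary collars give enough freedom to make the boundary lengths match. (4) \emph{Ensuring 3-connectedness (polyhedrality).} A cubic plane graph is 3-connected iff it has no two faces sharing more than one edge and no separating pair; inserting enough hexagons between the prescribed faces destroys all such coincidences, so after sufficiently many hexagon insertions from step (1) the graph is polyhedral, and by Steinitz's theorem it is the graph of a simple convex polyhedron.

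The main obstacle I expect is step (2): organizing the prescribed faces into a planar patch with controlled (single-cycle) boundary and only hexagons as filler, while keeping track of parity/divisibility constraints on the boundary length so that the closing-up in step (3) actually works. This is where one must be genuinely clever rather than merely careful: one needs a flexible family of ``transition'' regions made of hexagons that can absorb arbitrary boundary lengths and arbitrary arrangements of the $k\ne 6$ faces, and one must verify that the curvature accounting permitted by $(\ref{eq:basic})$ is not obstructed by any hidden local constraint. Everything else — cubicity, the $p_6 \mapsto p_6 + c$ propagation, and the final appeal to Steinitz — is routine once the patch is in hand.
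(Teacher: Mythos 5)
The paper does not actually prove Eberhard's Theorem; it states it and cites Eberhard~\cite{Eb} and Gr\"unbaum~\cite{Gru1}. The relevant comparison is with the paper's own triarc-based construction for the pentagon/heptagon analogue (Theorem~\ref{ebe57}), which the authors note in Section~\ref{other} specializes to Eberhard's Theorem when the neutral sequence is $(0,0,0,1)$. Your outline has the right overall shape and matches that template: realize the prescribed $k\ne 6$ faces inside a disk-shaped patch with controlled boundary, close it up to a sphere with hexagonal filler, then argue cofinality of $p_6$. But there is a genuine gap, and it is exactly where you flag it: step~(2) is not carried out. You never say how to lay out an arbitrary multiset of non-hexagonal faces in a patch whose interior vertices are all of degree $3$, whose boundary is a single cycle of controllable length and parity, and whose only other faces are hexagons; nor how to arrange for the boundary data to match in step~(3). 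That is not a matter of being ``merely careful'' --- it is the entire content of the theorem. The paper's triarc machinery is precisely the device your sketch is missing: a basic triarc is built around each prescribed face (a patch whose boundary alternates degree~$2$/degree~$3$ with three marked corners), triarcs are glued pairwise along sides of equal length (the alternating boundary makes this a well-defined operation), and the result is enlarged to an equilateral triarc whose side length satisfies explicit divisibility constraints so that it can be capped by a matching all-hexagon triarc and a boundary ring. Without an analogue of this rigid-boundary bookkeeping, steps~(2)--(3) do not connect.

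A secondary issue is step~(1). Not every cubic plane graph admits a local ``hexagon insertion'' preserving all other face sizes (there need not be a hexagonal face or a suitable alternating cycle), and the period $c$ in $p_6\mapsto p_6+c$ must in general exceed~$1$ (for $p_3=4$ only even $p_6$ is realizable), so the ``infinitely many'' conclusion has to come from structural freedom in the construction itself rather than from a generic local move. In the triarc framework this is automatic --- the side length of the final equilateral triarcs is a free parameter --- but your proposal leans on an insertion operation that is asserted rather than exhibited and whose availability depends on the very patch you have not yet built. Step~(4) inherits the same dependence: whether inserting hexagons destroys all face-coincidences and separating pairs can only be checked once the patch of step~(2) is explicit.
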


Eberhard's proof is not only long and messy but also some of its parts may
not satisfy today's standards of rigor. Gr\"unbaum \cite{Gru1} gave a simpler complete
proof utilizing graphs and Steinitz's Theorem. This result was strengthened by 
Fisher \cite{Fi1} who proved that there is always a value of $p_6$ that satisfies
$p_6 \le p_3+p_4+p_5+\sum_{k\ge7} p_k$.

Gr\"unbaum also considered a 4-valent analogue of Eberhard's theorem.
Fisher \cite{Fi2} proved a similar result for 5-valent polyhedra, establishing
existence for all sequences of face sizes with $p_4\ge 6$ that comply with Euler's formula. 

Various other generalizations of Eberhard's theorem have been discovered.
Papers by Jendrol' \cite{Je93a,Je93b} give a good overview and bring some of
today's most general results in this area. Some other relevant works include
\cite{Ba,BGH,En,Gru2,JJ2}.
Several papers treat extensions of Eberhard's theorem to 
the torus \cite{Gri,JJ1,Za1,Za2} and more general surfaces \cite{Je93a}.
It is worth pointing out that on the torus there is precisely one
admissible sequence (namely $p_5=p_7=1$ and $p_i=0$ for $i\notin\{5,7\}$),
for which an Eberhard-type result with added hexagons does not hold \cite{JJ1}.


In this paper we consider a similar problem that is also motivated by 
(\ref{eq:basic}). Let us suppose that we are given face sizes as before
but we are only allowed to change $p_5$ and $p_7$ (or $p_{6-t}$ and $p_{6+t}$
for some $t$, $1\leq t\leq 3$). In this case, we think 
of $p_k$ (for $k\ge 3$, $k \ne 5, 7$) as being fixed and $p_5,p_7$ as being free 
to choose. Equation (\ref{eq:basic}) determines
the difference $s = p_7 - p_5$, and we are asking if there exist $p_5$
and $p_7=p_5+s$ with a polyhedral realization. 
We give an affirmative answer to this question, and derive an extension solving the corresponding problem on an 
arbitrary closed surface. Our construction gives simple (i.e., 3-regular) polyhedral maps on a surface, and one can impose the additional conditions that these maps
have large face-width and their graphs be 3-connected. More precisely, we prove

\begin{theorem}\label{thintro}
Let  $(p_k), 3\le k\le r, k\ne 5,7$ be a sequence of non-negative integers,
let $S$ be a closed surface, and let $w$ be a positive integer. 
Then there exist infinitely many pairs of integers $p_5$ and $p_7$
such that \ti\ a 3-connected cubic  map realizing $S$, with face-width at least $w$, having precisely $p_k$ faces of size $k$ \fe\ $k \in \{3, \ldots, r\}$.
\end{theorem}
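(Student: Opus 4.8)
The plan is to realise the prescribed $k$-gonal faces with $k\neq5,7$ inside a bounded \emph{core}, built with the help of Eberhard's theorem, and then to surround the core by a large \emph{carpet} tiled almost entirely by pentagons and heptagons; the carpet carries the topology of $S$, forces the face-width above $w$, and — being available in arbitrarily many sizes — supplies infinitely many admissible pairs. Concretely, put $C=\sum_{k\neq5,7}(6-k)p_k$ (the sum ranging over $3\le k\le r$). Since a cubic map on a surface of Euler characteristic $\chi$ satisfies $\sum_k(6-k)p_k=6\chi$, an admissible pair must have $p_7-p_5=C-6\chi(S)=:s$, while we are free to take $p_5$ (hence $p_7=p_5+s$) as large as we like. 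It therefore suffices to construct, for each large $n$, a $3$-connected cubic map on $S$ of face-width at least $w$ realising the prescribed $p_k$ ($k\neq5,7$) with $p_5=p_5^{0}+n$ and $p_7=p_5^{0}+n+s$ for a fixed $p_5^{0}$.

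\emph{The core.} Apply Theorem~\ref{thm:Eberhard} to the sequence obtained from $(p_k)_{k\neq5,7}$ by adjoining the smallest non-negative values of $p_5,p_7$ with $p_7-p_5=C-12$ (so that Euler's formula on the sphere holds). This produces a $3$-connected cubic plane map with all prescribed face sizes $k\neq 6$ and with $p_6=N$ hexagons, where $N$ may be taken to be any sufficiently large constant. I would then use a short catalogue of local \emph{re-patching} operations — each replacing a sub-disk of the map by another cubic disk with the same boundary cycle, which automatically keeps $\sum_k(6-k)p_k$ unchanged and, one checks, preserves $3$-connectivity — in particular
\begin{enumerate}
\item a Stone--Wales-type move with $\Delta(p_5,p_6,p_7)=(1,-2,1)$, trading two hexagons for a pentagon--heptagon pair, and
\item a move with $\Delta(p_5,p_6,p_7)=(c,1,c)$ for a fixed $c$, used to place the hexagon count in the correct residue class.
\end{enumerate}
Applying these finitely often drives the hexagon count down to exactly the prescribed value $p_6$, the only side effect being to replace the starting $p_5,p_7$ by fixed larger values $p_5^{0},p_7^{0}=p_5^{0}+C-12$. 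Cutting the resulting closed map along a suitable cycle exposes a cubic disk $K$ (the core) with a \emph{bounded} number of faces: the prescribed $p_k$ for $k\neq5,7$, exactly $p_6$ hexagons, and a bounded number of pentagons and heptagons.

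\emph{The carpet and the assembly.} For each large $n$, construct a cubic map $\Gamma_n$ on $S$ having a single distinguished ``window'' whose boundary interface matches that of $K$, and all of whose other faces are pentagons and heptagons — with $n$ of each, up to the fixed imbalance forced by $\chi(S)$ and the interface — laid out in a pentaheptite-type pattern that is flat on the scale of $\sqrt n$; then $\Gamma_n$ has face-width at least $w$ once $n$ is large. Gluing $K$ into the window of $\Gamma_n$ by the standard patch-gluing operation yields a $3$-connected cubic map $M_n$ on $S$. Since the core is bounded and buried deep inside the carpet, every non-contractible curve of $M_n$ traverses at least $w$ faces of $\Gamma_n$, so the face-width of $M_n$ is at least $w$. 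By construction $M_n$ realises the prescribed $p_k$ for all $k\neq5,7$, while $p_5(M_n)=p_5^{0}+n$ and $p_7(M_n)=p_5^{0}+n+s$; letting $n\to\infty$ gives infinitely many admissible pairs.

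\emph{The main obstacle.} The real work lies in producing the local operations (i)--(ii) and the carpets $\Gamma_n$ with \emph{all} of the stated properties simultaneously: cubicity and $3$-connectivity must be maintained, no face of size $\neq5,6,7$ may be disturbed, the family of realisable changes to the hexagon count must be rich enough in residues to reach the \emph{exact} target $p_6$ from the Eberhard map (whose admissible hexagon counts we do not control — recall the parity restriction for four triangles and hexagons mentioned above, where $p_6$ must be even), and the carpets must be genuinely flat on a large scale while fitting the core's interface. A secondary, more routine point is to keep every surgery, and the insertion of $K$, localised enough never to create a short non-contractible curve — precisely why the core must sit deep inside a sufficiently fine carpet.
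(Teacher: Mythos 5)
Your plan is genuinely different from the paper's: the paper never invokes Eberhard's theorem, but instead builds the whole map directly out of ``triarcs'' (triangular patches of pentagons and heptagons surrounding one prescribed face each), glues these into a big equilateral triarc $T'$, and caps it with a pentagon/heptagon-only triarc $R$ of the same dimensions; the topology and face-width are then handled by enlarging the auxiliary hexagons to $6N$-gons before introducing handles and crosscaps. Your ``core $+$ carpet'' picture is in the same spirit as the paper's $T' + R$ decomposition, and using Eberhard as a black box is an attractive shortcut, but as written the argument has concrete gaps.

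The most visible problem is move (i): a local re-patching with $\Delta(p_5,p_6,p_7)=(1,-2,1)$ does not exist. Two adjacent hexagons have as their boundary a $10$-cycle whose two degree-$3$ vertices are antipodal (distance $5$), whereas a pentagon and a heptagon sharing an edge have their two degree-$3$ boundary vertices at distances $4$ and $6$; the degree profiles along the boundary do not match, so the patch cannot be swapped in a cubic map. The genuine Stone--Wales move (the paper's Figure~\ref{2x2tile}) has $\Delta=(2,-4,2)$, trading a $2\times 2$ block of hexagons for an H-tile. Using it instead forces you to control $p_6$ modulo~$4$, which brings move (ii) into play; but move (ii) is only asserted, not constructed, and it is also not clear that the Eberhard map contains enough hexagons arranged in $2\times 2$ blocks to perform the required number of Stone--Wales moves (Eberhard's theorem only guarantees \emph{some} infinite set of workable $p_6$-values, with no control over the hexagons' configuration). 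Beyond that, the ``carpet'' $\Gamma_n$ — a cubic map on $S$ made entirely of pentagons and heptagons except for a window whose boundary interface matches $K$, with face-width $\ge w$ and $3$-connected — is precisely where the real work lies, and your proposal does not construct it; in the paper this is exactly what the triarc machinery, the $(4,4,3)$- and $(2,2,4)$-triarcs, the $(8,8,8)$-triarc $D$, the $6N$-gon gadgets, and the explicit disjoint-paths argument for edge-width are for. So the proposal is a plausible alternative outline, but the parts it leaves as assertions (the two local moves, the carpets, and the face-width bound) are the actual content of the proof, and one of the two moves as stated is impossible.
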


It is worth observing that we also fix the number $p_6$ of hexagonal faces. Secondly, observe that the extension of Eberhard's Theorem to a surface $S$ other than the sphere needs an adjustment in (\ref{eq:basic}); the right hand side has to be replaced
by $6\chi(S)$ where $\chi(S)$ is the Euler characteristic of $S$. However, in our setting the formula adjusts itself
by using an appropriate number of pentagons and heptagons.

Finally, as we point out in \Sr{other}, our proof suggests a general method for obtaining results of this kind.

\section{Definitions}

A finite sequence $p=(p_3,p_4,\ldots, p_r)$ is \defi{plausible} for a closed surface $S$ if 
\begin{equation} \label{plau}
  \sum_{3\leq k \leq r} (6-k)p_k= 6\chi(S)
\end{equation}
where $\chi(S)$ is the Euler characteristic of $S$. By Euler's formula,
\eqref{plau} is a necessary condition for the existence of a cubic graph
embeddable in $S$ with precisely $p_k$ $k$-gons for $3\leq k \leq r$ and no other faces. If there exists a cubic graph which is 2-cell embeddable in $S$ with
precisely $p_k$ faces of size $k$ for $3\leq k \leq r$ 
and no other faces, then we say that $p$ is \defi{realizable} in $S$. 
If\/ $\sum_{3\leq k \leq r} (6-k)p_k= 0$, then we call 
$p$ a \defi{neutral} sequence. For any two such sequences, one can consider their
sum which is defined in the obvious way. Let us observe that the sum of 
a neutral sequence and a plausible sequence is a plausible sequence. We would like to understand in this context which plausible sequences are realizable, and try to do so by asking when a sum of a plausible sequence with an appropriate  neutral sequence is realizable. For the neutral sequence $(0,0,0,1)$ this is Eberhard's theorem.

The most important building block in both Eberhard's as well as our proofs is a construction called a triarc. A \defi{triarc} is a plane graph $T$
such that the boundary $C$ of the outer face of $T$ is a cycle, and moreover the following conditions are satisfied (examples are the graphs in 
\fig{443224tr} with the half-edges in the outer face removed):
\begin{itemize}
\item every vertex of $T - C$ has degree 3 in $T$;
\item $C$ contains distinct vertices $x,y,z$ of degree 2 (called the \defi{corners} of the triarc) such that the degrees (in $T$)
of the vertices on each of the three paths in $C - \{x,y,z\}$ alternate between 2 and 3, starting and ending with a vertex of degree 2.
\end{itemize}
A \defi{side} of a triarc $T$ as above is a subpath of $C$ that starts and ends at distinct corners of $T$ and does not contain the third corner. The
\defi{length} of a side $P$ of $T$ is the number of inner vertices of degree 2 on $P$; note that although the corners of a triarc have degree 2, they
are not counted when calculating the lengths of its sides. 
A triarc with sides of lengths $a$, $b$, $c$ is called an 
\defi{$(a,b,c)$-triarc}. Of course, we can flip or rotate such a triarc 
and consider it, for example, as a $(b,a,c)$-triarc.

Triarcs are very versatile tools. Firstly, if the length of some side of a triarc $T$ equals the length of some side of another triarc
$R$, then $T$ and $R$ can be glued together along those sides to yield a new plane graph with all inner vertices having degree 3; see for example
\fig{fig0}. Secondly, every triarc $T$ has zero total curvature; to see this, take two copies of $T$, turn one of
them upside down, glue them along a common side to obtain a `parallelogram' 
(see \fig{fig0} again), and identify opposite sides of this parallelogram
to obtain a graph embeddable in the torus.
But perhaps the most important property of triarcs is the possibility to `glue' them together to obtain larger triarcs; we describe this operation below. 

\showFigR{fig0}{Glueing two triarcs along sides of equal length. The dots represent the corners of the triarcs.}{10.5cm} 

\showFigR{glueing57}{Glueing two triarcs with two sides of even length together using the tile of \fig{2x2tile}.}{7cm}

\showFigR{2x2tile}{In a configuration of 4 hexagons we may contract the central edge and then ``uncontract'' it in the other direction. A ``tile'' consisting of two 
pentagons and two heptagons results; we use such tiles in \fig{glueing57}.}{4cm}

\newcommand{\htile}{$(5,7)$-tile}
\renewcommand{\htile}{H-tile}
Suppose we have an $(a_1, b_1, c_1)$-triarc and an $(a_2, b_2, c_2)$-triarc such that  $b_1 = 2m$ and $c_2=2l$ are even. Then, we may combine these triarcs (and several pentagons and heptagons) to construct an
$(a_1+a_2, b_1+b_2, c_1+c_2)$-triarc.
To do this, we identify a corner (and an incident edge) of the first triarc with a corner (and an edge) of the second triarc ---see Figure~\ref{glueing57}--- so that the two identified corners yield a vertex of degree~3 on a side 
of length $a_1+a_2$ in a new triarc.
Then, we can add a \defi{``parallelogram''} consisting of hexagons to obtain an $(a_1+a_2, b_1+b_2, c_1+c_2)$-triarc.
However, we do not want to add hexagons. Instead, we decompose the parallelogram into tiles each consisting of four hexagons as depicted in Figure~\ref{glueing57}, and replace each
of these tiles by two pentagons and two heptagons as indicated in Figure~\ref{2x2tile}. The ``tile'' on the right of Figure~\ref{2x2tile} will be used several times in the sequel, and we shall refer to it as a \defi{\htile}.

We are going to use this operation of glueing two triarcs into a larger one several times in the following section.

\section{Proof of \Tr{thintro}} 

\label{main}

We are ready to state and prove our main result. 
Let us observe that, unlike Eberhard's Theorem, we do not need to assume 
that the given face-sizes form a plausible sequence (although we make this
assumption in the formulation of the theorem) because given a sequence 
 $(p_k), 3\le k\le r, k\ne 5,7$, the sequence can always be appended by 
appropriate values $p_5$ and $p_7$ to become plausible. 

\begin{theorem}\label{ebe57}
Let $p=(p_3,p_4,\ldots,p_r)$ be a plausible sequence for the sphere. Then there 
exist infinitely many integers $n\in \N$ such that the sequence\\ $p+n\cdot (0,0,1,0,1)$ 
is realizable in the sphere.
\end{theorem}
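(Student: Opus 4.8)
The plan is to realize the plausible sequence $p$ inside a large ``hexagonal background'' and to pay for the non-hexagonal faces locally using triarcs, so that the only global cost is a controlled number of pentagon--heptagon pairs. Concretely, I would first reduce to the following statement: for each $k$ with $3\le k\le r$, $k\ne 5,7$, there is a triarc $T_k$ (a ``patch'') whose interior contains exactly $p_k$ faces of size $k$ and whose three side-lengths are even; moreover a ``background'' triarc $H$ all of whose interior faces are hexagons, with prescribed even side-lengths. Such patches are essentially the standard Eberhard-type gadgets: a $k$-gon can be surrounded by hexagons to form a small zero-curvature chunk, and several of them can be strung together; the key point, which is where I would be careful, is to arrange that every side of every patch has \emph{even} length so that the gluing lemma of the previous section (the one using H-tiles, Figures~\ref{glueing57} and~\ref{2x2tile}) applies. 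Evenness is typically arrangeable by padding a side with one extra hexagon, changing a side length by $2$ at the cost of no new non-hexagonal faces.

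Next I would iterate the triarc-gluing operation. Starting from the patches $T_k$ and the background triarc(s) $H$, I glue them pairwise using the operation described before Section~\ref{main}: each gluing of an $(a_1,b_1,c_1)$-triarc with an $(a_2,b_2,c_2)$-triarc (with $b_1,c_2$ even) yields an $(a_1+a_2,b_1+b_2,c_1+c_2)$-triarc at the cost of inserting a parallelogram of hexagons, which we then subdivide into four-hexagon tiles and replace each by an H-tile, i.e.\ two pentagons and two heptagons. Thus each gluing contributes some number of H-tiles, hence equal numbers of extra pentagons and heptagons, and nothing else; after finitely many gluings I obtain a single triarc $T^\ast$ whose interior contains exactly $p_k$ faces of size $k$ for every $k\ne 5,7$, and exactly $2N$ pentagons and $2N$ heptagons for some $N$ depending on the construction. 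By choosing how finely I subdivide the parallelograms (or by inserting extra ``trivial'' gluings of two hexagonal triarcs), I can make $N$ take any sufficiently large value, and in fact realize $N=n$ for all $n$ in an arithmetic progression, which gives the ``infinitely many $n$'' in the statement --- here I would match $2N = p_5 + 5n$ type bookkeeping against the target increment $n\cdot(0,0,1,0,1)$, adjusting the base sequence $p$ by a fixed neutral amount absorbed into the patches.

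Finally I close up the triarc $T^\ast$ into a cubic plane graph with no outer face boundary: since $T^\ast$ has zero total curvature, I can take a second copy, glue along a side to form a ``parallelogram'' as in Figure~\ref{fig0}, and then either close that up appropriately or, better, glue three or six copies of $T^\ast$ around so that the three corners disappear --- the standard trick is that the ``defect'' $6\chi$ on the sphere is $12$, and with pentagons and heptagons available the curvature balances, so that a suitable symmetric arrangement of copies of the basic patch (plus a bounded sphere-cap construction) yields a cubic plane graph; the net effect on face counts is that $p_k$ for $k\ne 5,7$ gets multiplied by the (fixed) number of copies, which I compensate for by instead putting $p_k/(\text{number of copies})$ worth of each face in the original patch --- if divisibility fails, I absorb the remainder into one distinguished patch rather than spreading it. The only genuinely delicate part is this last assembly: making sure the pieces fit together into a \emph{simple} $3$-connected plane graph. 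For $3$-connectivity and simplicity I would note that all the local pieces have girth and connectivity bounded below once the side-lengths are taken large (which we are free to do, since lengthening a side of a hexagonal background triarc costs nothing), and large side-lengths also force large face-width, giving the strengthening to arbitrary surfaces in Theorem~\ref{thintro} by replacing the final sphere-cap with a handle/crosscap gadget. I expect the main obstacle to be precisely the evenness/parity bookkeeping across all the patches simultaneously together with the final closing-up, rather than any single hard idea.
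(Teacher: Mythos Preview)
Your proposal has a genuine gap: you repeatedly treat hexagons as free filler, but in this theorem $p_6$ is part of the \emph{given} data and cannot be changed. You speak of ``a `background' triarc $H$ all of whose interior faces are hexagons'', of ``padding a side with one extra hexagon \ldots\ at the cost of no new non-hexagonal faces'', and of ``inserting extra `trivial' gluings of two hexagonal triarcs''. Each of these adds hexagons beyond the prescribed $p_6$, and the neutral sequence $(0,0,1,0,1)$ you are allowed to add contains no hexagons. This is exactly the point that distinguishes the present theorem from Eberhard's original one. The paper resolves this by making \emph{every} auxiliary piece out of pentagons and heptagons only: the basic triarc for a $k$-gon surrounds it by three heptagons and $k-3$ pentagons (not hexagons); the parallelograms in the gluing are tiled by H-tiles; and explicit $(4,4,3)$- and $(2,2,4)$-triarcs built from pentagons and heptagons alone are used to adjust side-lengths and parities. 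Your parity fix (``pad with a hexagon'') is therefore unavailable, and you would need these $5/7$-only gadgets in its place.

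Your closing-up step is also not a proof as it stands. Taking several copies of $T^\ast$ multiplies every $p_k$ by that number, and ``put $p_k/(\text{copies})$ in each patch, absorb remainders in one distinguished patch'' is circular: the distinguished patch still has to be closed up into a sphere together with the others without introducing stray faces. The paper avoids this entirely: it enlarges the single triarc $T$ (containing all prescribed faces exactly once) to an equilateral triarc $T'$ whose side-length lies in a suitable residue class, builds a second equilateral triarc $R$ of the same dimensions using only pentagons and heptagons, and joins $T'$ and $R$ by a ring of pentagons and heptagons (Figure~\ref{cyl57}); Euler's formula then forces the added pentagons and heptagons to be equal in number. The ``infinitely many $n$'' comes simply from the freedom to make $T'$ and $R$ arbitrarily large.
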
 
\begin{proof}
We will give an explicit construction of a cubic graph embeddable in the sphere whose face sequence is of the form $p+n\cdot (0,0,1,0,1)$.
The rough plan for this is as follows. For each face imposed by the sequence $p$,
we create a \defi{basic triarc} containing this face as well as some pentagons and heptagons. Then,
we glue all these triarcs together and extend to a triarc with sides of suitable lengths. Finally, we construct a new triarc having the same side lengths, and glue these
two triarcs together (as explained later) to obtain the desired graph embedded in the sphere.

To construct a basic triarc for a $k$-gon (we will make $p_k$ copies of it), we surround the $k$-gon by three heptagons and $k-3$ pentagons
as shown in the right half of \fig{443224tr} (where the $k$-gon we are surrounding happens to be a pentagon). Note that we can always make the basic triarc isosceles with the equal sides having even length. We call the $k$-gon we started with the \defi{nucleus} of this triarc.


Having constructed all basic triarcs, our next step is to glue them all together to obtain a single triarc $T$ containing them all. We do so
recursively, attaching one basic triarc at a time as shown in \fig{glueing57}, where we use many copies of the \htile\ in order to
build the parallelogram needed. Each time we use this glueing operation we are assuming that both triarcs in \fig{glueing57} are isosceles, with the equal sides having even length, and align them so that the two equal even sides are the upper left and upper right side. Note that the resulting triarc is also isosceles with two equal sides of even length. Thus, we can continue recursively to glue all basic triarcs into one isosceles triarc $T$.


Our next aim is to enlarge $T$ into an equilateral triarc $T'$ with sides of length $n$, where $n$ is a multiple of 8 and satisfies 
$n\equiv 2 \pmod 3$, using only pentagons and heptagons. To this end, we will use the glueing operation of \fig{glueing57} and many copies of a $(4,4,3)$-triarc and a $(2,2,4)$-triarc.
\fig{443224tr} shows how to construct those triarcs with pentagons and heptagons only. 

\showFigR{443224tr}{A $(4,4,3)$-triarc and a $(2,2,4)$-triarc.}{10cm}

Note that glueing $T'$ with a $(4,4,3)$-triarc (as in \fig{glueing57}) keeps it isosceles and decreases the difference of lengths between the ``base''
and the other two sides by 1, while glueing with a $(2,2,4)$-triarc increases that difference by 2. Thus, recursively glueing with such triarcs we can
enlarge $T$ into an equilateral triarc $S$ with sides of even length. 

Moreover, using the glueing operation of \fig{glueing57} three times, once with a $(2,2,4)$-triarc and twice with a $(4,4,3)$-triarc, 
we can increase the side-lengths by $(2,2,4)+(4,4,3)+(4,4,3)=(10,10,10)$. Thus we can increase the length of each side of $S$ by 10. Since $10 \equiv 1
\pmod 3$, we can use this operation to enlarge $S$ into an equilateral triarc $S'$ with even sides of length $2 \pmod 3$. Moreover, since performing
this operation three times increases the length of each side by 30, and $30 \equiv 6 \pmod 8$, we can enlarge $S'$ into an equilateral triarc $T'$ with the length
of each side being a multiple of $8$ and congruent to $2$ modulo $3$.  


Next, we are going to construct a triarc $R$ that has the same 
side lengths as $T'$ but consists of pentagons and heptagons only. 
By glueing together a $(2,2,4)$-triarc, a $(2,4,2)$-triarc and a $(4,2,2)$-triarc (that is, the same triarc in three different
rotations) we get an $(8,8,8)$-triarc, which we will call~$D$.
Since the sides of $T'$ have length a multiple of 8, by glueing copies of $D$ together recursively as in \fig{glueing57} we can indeed construct a 
triarc~$R$ that has the same dimensions as $T'$.


We can now combine $R$ and $T'$ together to produce a cubic graph tiling the sphere as shown in \fig{cyl57}. By construction, this graph has for every 
$k\in \N \sm \{0,1,2,5,7\}$ precisely $p_k$ faces of size $k$, and moreover it has at least $p_5$ pentagons and at least $p_7$ heptagons. Thus its face sequence is of the form $p+(0,0,n,0,m)$ for some $n,m\in \N_+$. Since both $p$ and $p+(0,0,n,0,m)$ satisfy Euler's formula (the former by assumption, the latter because the plane graph we just constructed implements it), we have $n=m$. 

This completes the construction and shows the existence of one particular value of $n$ as desired. However, observe that the construction of $T'$ and $R$ allows us to make
the side lengths of these triarcs arbitrarily large. This shows that we can get
examples for infinitely many values of $n$ and thus completes the proof. 
\end{proof}

\showFig{cyl57}{Glueing $R$ and $T'$ together along a ``ring'' consisting of pentagons and heptagons. This operation is possible because we made sure that every side 
of $T'$, and thus also of $R$, has length congruent to $2 \pmod 3$.}

We now turn from planar graphs to maps on arbitrary (compact) surfaces.
A \defi{map} on a surface $S$ is a graph together with a 2-cell embedding in $S$.
A map is \defi{polyhedral} if all faces are closed disks in the surface and 
the intersection of any two faces is either empty, a common vertex or a common
edge. If the graph of the map is cubic, then we say that the map is \defi{simple}.

A cycle contained in the graph of a map is \defi{contractible} if it bounds a disk
on the surface. The \defi{edge-width} of a map $M$ is the length of a shortest 
non-contractible cycle in $M$. The \defi{face-width} of $M$ is the minimum number
of faces, the union of whose boundaries contains a non-contractible cycle.
We refer the reader to \cite{MT} for more about the basic properties and the 
importance of these parameters of maps. At this point we only note that a map
is polyhedral if and only if its graph is 3-connected and its face-width
is at least three, see \cite[Proposition~5.5.12]{MT}. We also note that
if $r$ is the largest size of a face of $M$, then the edge-width of $M$
cannot exceed $\tfrac{r}{2}$ times the face-width of $M$.

We now restate and prove our main result, \Tr{thintro}.

\begin{corollary}\label{cor}
Let  $(p_k), 3\le k\le r, k\ne 5,7$ be a sequence of non-negative integers,
let $S$ be a closed surface, and let $w$ be a positive integer. 
Then there exist infinitely many pairs of integers $p_5$ and $p_7$
such that the sequence $(p_3,p_4,p_5,\dots,p_r)$ is realizable in $S$ and \ti\ a 3-connected realizing cubic map of face-width at least $w$.
\end{corollary}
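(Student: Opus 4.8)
The plan is to re-use, essentially verbatim, the planar construction from the proof of \Tr{ebe57}, but to glue the triarc carrying the prescribed faces not onto the auxiliary triarc $R$ (which produced the sphere) but onto a fixed map on $S$ chosen to carry the topology of $S$. Call a face of a cubic map \emph{admissible} if its size is $5$ or $7$. The one genuinely new ingredient is a lemma asserting that \emph{for every closed surface $S$ and every integer $w\ge 3$ there is a cubic polyhedral map $N=N(S,w)$ on $S$, of face-width at least $w$, all of whose faces are admissible, and containing equilateral sub-triarcs of arbitrarily large side length.} Granting this, the argument is short. First I would apply the proof of \Tr{ebe57} to the given sequence $p$ to build the equilateral triarc $T'$ of side length $n$ (which can be made as large as we like), containing precisely $p_k$ faces of each size $k\in\{3,\dots,r\}\setminus\{5,7\}$ together with some admissible faces. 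Then I would pick inside $N$ an equilateral sub-triarc $R_0$ of side length $n$ (both $N$ and $T'$ being assembled from copies of the $(8,8,8)$-triarc of \Tr{ebe57}, so their side lengths are compatible), delete the interior of $R_0$, and glue $T'$ into the resulting hole along the three matching sides. Since every triarc has zero total curvature, swapping the patch $R_0$ for the patch $T'$ yields again a cubic map on $S$. Its faces are the admissible faces of $N$ outside $R_0$ together with the faces of $T'$; hence for each $k\in\{3,\dots,r\}\setminus\{5,7\}$ it has exactly $p_k$ faces of size $k$, it has no face of size outside $\{3,\dots,r\}$, and --- declaring $p_5,p_7$ to be the numbers of pentagons and heptagons it contains --- it realizes $(p_3,\dots,p_r)$ in $S$. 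Since the number of faces of $T'$, hence $p_5+p_7$, is unbounded as $n\to\infty$ while the $p_k$ with $k\ne 5,7$ stay fixed, letting $n$ vary yields infinitely many pairs $(p_5,p_7)$.

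Two things would then remain: the face-width and $3$-connectivity of the assembled map, and the lemma. For face-width: a non-contractible cycle cannot lie inside the disk $R_0$, so after routing its sub-arcs in $R_0$ onto $\partial R_0$ we get a non-contractible closed walk meeting $T'$ only along $\partial R_0$, and such a walk passes through at least as many faces as some non-contractible cycle of $N$ avoiding the interior of $R_0$ --- at least $w$, if $N$ is chosen with face-width sufficiently larger than $w$ and $R_0$ is small relative to $N$. For $3$-connectivity (equivalently, since $w\ge 3$, for the map to be polyhedral): $N$ is polyhedral, in $T'$ distinct ``special'' faces are separated by rings of pentagons and heptagons (which is how the basic triarcs are built), and a short inspection along the glueing seam --- the same one already implicit in \Tr{ebe57} for the sphere --- shows no $1$- or $2$-cut appears; any accidental small cut could be removed by a local modification of the seam. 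When $S$ is the sphere there is no face-width requirement, and only this $3$-connectivity check is needed, applied directly to the map of \Tr{ebe57}.

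That leaves the lemma, which I expect to be where the real work is. For the torus I would take the equilateral pentagon-heptagon triarc $R$ of \Tr{ebe57}, with side length a large multiple of $w$, glue two copies along a side to form the ``parallelogram'' of \fig{fig0}, and identify its two pairs of opposite sides; this gives a cubic torus map of face-width of order $w$ with all faces pentagons or heptagons, which plainly contains equilateral sub-triarcs of every size up to roughly its own. Identifying one pair of opposite sides with a flip gives such a map on the Klein bottle. For a surface of higher genus, resp.\ higher even crosscap number, I would connect-sum several copies of the torus, resp.\ Klein bottle, map: to form $A\#B$, delete from each of $A,B$ the interior of a long cycle and identify the two resulting boundary cycles with complementary degree patterns (possible because these maps are grid-like); this merges no face, so the connected sum is again an all-admissible cubic map, and taking all summands large keeps the face-width above $w$ and preserves large grid-like regions, hence large sub-triarcs. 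The one remaining surface is a non-orientable one of odd crosscap number, for which one first needs an all-admissible cubic map on the projective plane of face-width $\ge w$; I would get this from a sufficiently fine, antipodally symmetric Goldberg polyhedron by passing to the quotient under the antipodal map and then eliminating its hexagons via the $2\times 2$-to-H-tile substitution of \fig{2x2tile} (for a suitable choice of Goldberg parameters the hexagons group into the required $2\times 2$ blocks), and then connect-summing with tori as above. The hardest part is thus proving this lemma, and within it the projective-plane case: a naive identification of the boundary of a disk would force vertices of degree greater than $3$, so one has to build the crosscap more carefully. Everything else is the by-now-routine re-use of the triarc machinery of \Tr{ebe57}.
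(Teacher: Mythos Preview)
Your route is genuinely different from the paper's. The paper starts from the planar map of \Tr{ebe57} with a few auxiliary hexagons (or $6N$-gons, for large face-width) thrown in, and then performs \emph{local} surgery on those auxiliary faces to manufacture the topology: pairs of them are cut out and their boundaries identified with a half-twist to create handles, and single ones are replaced by an explicit M\"obius-band gadget, built from pentagons and heptagons only, to create crosscaps; a contract--uncontract trick then repairs the face sizes along each seam. Your plan goes the other way: first build an all-$(5,7)$ cubic map $N$ on $S$ of large face-width, then transplant the triarc $T'$ into a triarc-shaped hole of $N$. Conceptually your version is tidier---the topology is dealt with once, and the face-count argument is a one-liner---but it pushes all the work into the existence of $N$.

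That lemma is where your sketch has genuine gaps. First, the connected-sum step: if you delete an open face from each of two cubic maps and identify the two boundary $k$-cycles vertex-to-vertex, every seam vertex acquires degree~$4$, so the result is not cubic. The standard fix is to identify with a half-twist (vertices to edge-midpoints), but then every face touching the seam gains an edge, so your ``all faces admissible'' conclusion fails along the seam; you would need an extra local repair (this is exactly what the paper's contract--uncontract move does, but you have not provided one). Second, the projective-plane case: in a Goldberg polyhedron each pentagon is surrounded by a ring of five hexagons, and five hexagons cannot be absorbed into disjoint $2\times2$ blocks, so the hexagons near the pentagons obstruct any global $2\times2$ partition; the bare assertion ``for a suitable choice of Goldberg parameters the hexagons group into the required $2\times2$ blocks'' needs a real argument that I do not see. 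The paper sidesteps both difficulties by never needing an all-$(5,7)$ map on $S$ at all: its handle and crosscap gadgets are small, explicit, and come with the seam-repair built in.
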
 
\begin{proof}
Let us first describe a construction that does not necessarily achieve the desired face-width; we will later explain how to modify this 
construction in order to get large face-width. 

The rough sketch of this construction is as follows.
Firstly, we increase the number of hexagons in the sequence $(p_k)$ to $p'_6:= p_6 + 2h+c$, where $h$ is the number of handles of $S$ and $c$ the number of its crosscaps (by the surface classification theorem we may assume that one of $h,c$ is zero, but we do not have to). It follows from \Tr{ebe57} that we can increase the numbers $p_5$ and $p_7$ of this sequence to some appropriate values so that the resulting sequence $p'$ is realized by a map on the sphere. We will then use the $2h+c$ auxilliary hexagons of this map we added above to introduce some handles and/or crosscaps. After doing so, all auxilliary hexagons will have disappeared, and we obtain a map on $S$ whose sequence of faces differs from $(p_k)$ by some pentagons and heptagons only.


More precisely, similarly to the proof of \Tr{ebe57}, we construct a basic triarc for each face in $p'$, but with one modification: for each hexagon we construct a
triarc like the one in \fig{newhex} (on the left) rather than one with two even sides of equal lengths (in fact, we need this modification for the auxiliary hexagons only, but we might as well use it for the original hexagons in $p$ as well). 

\begin{figure}[ht]
   \centering
   \noindent
   \includegraphics[width=3.8cm]{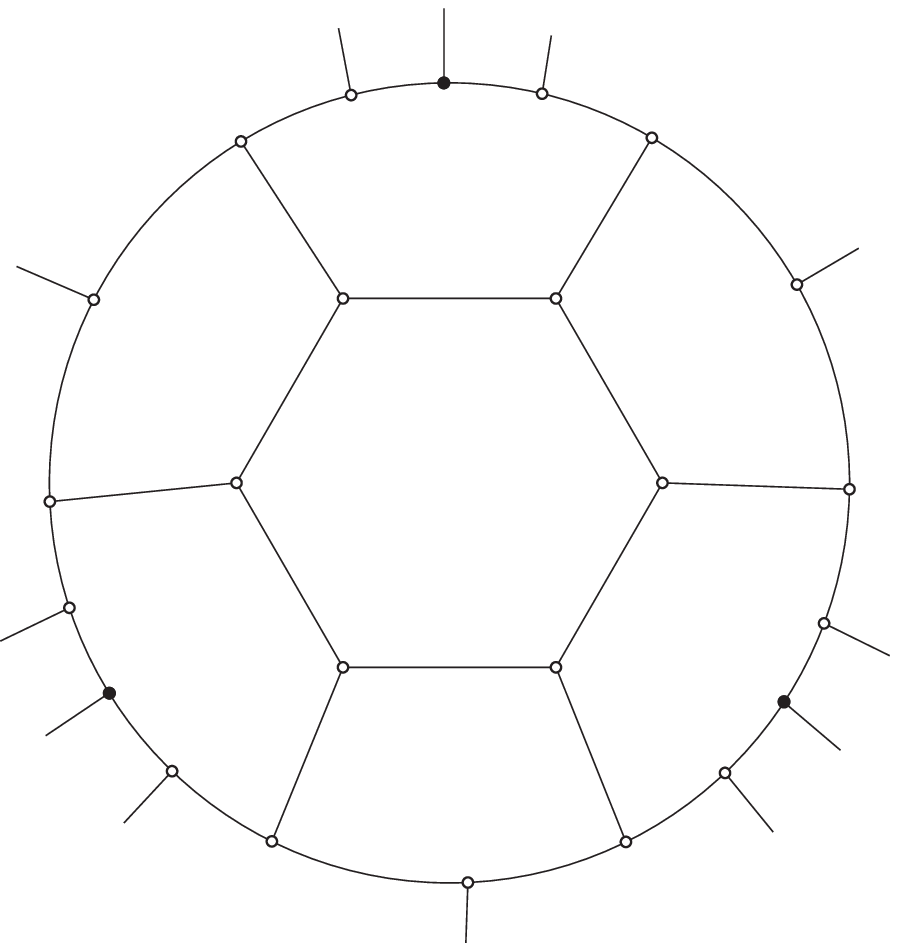}
   \qquad\qquad
   \includegraphics[width=4cm]{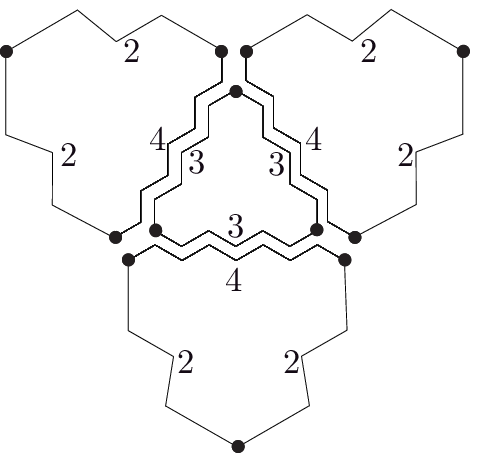}
   \caption{\small On the left: the new basic triarc for a hexagon.
    On the right: extending the triarc from the left into an equilateral 
    triarc with even sides.
   }
   \label{newhex}
\end{figure}

Next, we proceed as in \Tr{ebe57} to glue all basic triarcs together into one triarc $T$. However, since we now have basic triarcs with all sides odd
(the ones of \fig{newhex}), the glueing operation of \fig{glueing57} will not work for these triarcs. For this reason, we first extend each such triarc
into an equilateral triarc with even sides using three copies of the $(2,2,4)$-triarc of \fig{443224tr} as shown in \fig{newhex} (right).

\showFigR{handle}{The situation arising after introducing a handle. The 12-cycle $C$ consists of the dashed and the thick edges.}{5cm}

We continue imitating the proof of \Tr{ebe57} to obtain a cubic graph $G$ embedded in a homeomorphic copy $S'$ of the sphere that contains all basic triarcs. We will now perform some cutting and glueing operations on both $S'$ and $G$ to obtain a new surface, homeomorphic to $S$, with a cubic graph $G'$ embedded in it.

Suppose that $h>0$. Then, pick $h$ pairs $(F_1,F'_1), \ldots, (F_h,F'_h)$ of hexagonal faces of $G$, such that all the faces $F_i$ and $F'_i$ are distinct (there are enough hexagonal faces by our choice of the sequence $p'$). Now for each pair $(F_i,F'_i)$ perform
the following operations. Cut out the two discs of $S'$ corresponding to $F_i,F'_i$, and glue their boundaries together with a half-twist; that is, each
vertex of the boundary of $F_i$ is identified with the midpoint of an edge of $F'_i$ and vice-versa. This operation creates a handle in $S'$, and the
embedded graph remains cubic; however, it also gives rise to some unwanted faces: the size of each face that was incident to $F_i$ or $F'_i$ has now
been increased by 1. We thus have the situation depicted in \fig{handle}, where $C$ is the cycle of length 12 resulting from the boundaries of $F_i$
and $F'_i$. Recall that since every hexagon is put in a basic triarc like the one in \fig{newhex}, the sizes of the faces on each side of $C$
alternate between 6 and 8 as shown in \fig{handle}. But now, contracting and uncontracting each of the three thick edges
(in the way explained in \fig{2x2tile}) turns each of the faces incident with $C$ into a heptagon. 

\showFigR{crosscap}{The gadget used to create a crosscap inside a hexagon.}{6cm}

On the other hand, if $c>0$, then pick $c$ distinct hexagonal faces $F_1, \ldots, F_c$, and for every $i$ cut out the disc corresponding to $F_i$ and
glue in its place the outside of the hexagon of \fig{crosscap} with a half twist. Each such operation gives rise to a new crosscap, but also to unwanted faces
just like in \fig{handle}. But again, contracting and uncontracting each of the three thick edges we can turn all these unwanted hexagons and octagons into heptagons.

Thus, after all these operations have been completed, we obtain a surface with $h$ handles and $c$ crosscaps with a cubic graph embedded in it whose
face sequence is $p + (0,0,n,0,m)$ for some $n,m\in \N_+$. Note that all 
auxiliary hexagons in $p' - p$ have disappeared after the above operations. 

It is easy to check that our maps are 3-connected. 
Indeed, our ``building blocks'' -- the basic triarcs and the triarcs of Figure~\ref{443224tr} -- are 
3-connected after suppressing the degree~2 vertices. This property is also true
for triarcs in Figure~\ref{newhex}, and it is maintained by the glueing operation of Figure~\ref{glueing57}. 
By glueing two triarcs along a ``circumference'', using the ring in Figure~\ref{cyl57}, we get a 3-connected graph.
The gadget we used for introducing cross-caps (Figure~\ref{crosscap}) is 3-connected, and it is not hard to see 
that we maintain 3-connectivity when adding this gadget or when creating a handle as depicted in Figure~\ref{handle}. 

\medskip
It remains to discuss how to modify this construction to obtain maps with arbitrarily large face-width.
By the remark preceding Corollary \ref{cor}, it suffices to construct maps with arbitrarily
large edge-width $z$ since the face sizes are bounded from above by $r$.
This is achieved as follows. 

First of all, we make every basic triarc used in 
the construction large enough that the distance from its nucleus to the boundary of the triarc is at least $z$ and the length of each side of each triarc is at least $3z$. This can be achieved by the method we used in the proof of \Tr{ebe57} to enlarge $T$ into an equilateral triarc $T'$.
to the boundary of the triarc is at least $z$ and the length of each side of each triarc is at least $3z$. 
To achieve this, we first glue the triarc with several $(2,2,4)$-triarcs (or any other triarcs) 
both on the left and on the right, to obtain a large triarc with the nucleus in the middle of the bottom side.
Then we possibly glue it with a $(4,4,3)$-triarc to create a triarc with all sides even.
Finally, we rotate the triarc by $120^\circ$ and perform more glueing with $(2,2,4)$-triarcs to get the nucleus
away from the boundary. (The notions ``left'', ``right'', and ``$120^\circ$'' in this paragraph refer to the glueing operation 
of Fig.~\ref{glueing57}.)


Next, we replace the auxiliary 
hexagons used in order to add handles and crosscaps with $6N$-gons,
where $N$ is odd and greater than $z/2$. Of course, this will force us to add some more pentagons to our sequence $p_k$ to make it plausible. Note that we can generalize the triarc on the left of \fig{newhex} 
so that  the inner 6-gon is replaced by a $6N$-gon 
surrounded by three heptagons and $6N-3$ pentagons, arranged in a symmetric way so that any two heptagons separate $2N-1$ pentagons from the rest.  We will make use of the fact that $2N-1$ is odd.  We need to adapt the right half of \fig{newhex} as well, since the inner triarc has now grown larger. For this, note that each side of the inner triarc has now length $2N+1$, and so in order to use the method of the right half of \fig{newhex} the three peripheral triarcs must have a base of length $2N+2$ (in addition to having their other two sides of equal length). Since we chose $N$ to be odd, it turns out that $2N+2$ is a multiple of four, and so we can construct the required peripheral triarcs by glueing several $(2,2,4)$-triarcs together using \fig{glueing57} into a $(N+1,N+1,2N+2)$-triarc.

Moreover, the crosscap gadget shown in \fig{crosscap} can be generalized
so that  the inner 6-gon is replaced by a $6N$-gon that is surrounded by $3N$  heptagons and $3N$ pentagons, arranged alternatingly around the $6N$-gon (here it is also important that we chose $N$ to be odd).

When the time comes to insert crosscaps or glue pairs of such $6N$-gons together (after a half-twist),
we obtain a similar configuration as in \fig{handle}, but with $3N$ thick
edges. Some of these thick edges are surrounded by faces of sizes $8,6,8,6$
(as in \fig{handle}), while others are surrounded by four hexagons or by one octagon and three hexagons. Note, however, that for parity reasons we can make sure that every octagon is incident with a thick edge, and still every fourth edge on the dashed cycle is thick. Finally, the contract-uncontract operation of \fig{2x2tile} turns these faces into pentagons and heptagons only. 

It is easy to see that these changes did not hurt 3-connectivity.
Let us now argue that the resulting map $G$ has edge-width at least $z$.
Recall that the surface $S$ is obtained from a plane graph $G'$, embedded in the sphere, that is composed
of large basic triarcs $T_1,\dots,T_m$, some large parallelograms 
used to glue the basic triarcs together into a large triarc $T$, and a remainder $X$ comprising the material we used to enlarge $T$ into $T'$, the ring of \fig{cyl57}, and the triarc $R$. 
Let $L_i$ be the nucleus of $T_i$. Then $S$ is obtained
from $G'$ by glueing the crosscap gadget into some of the $6N$-gons $L_i$, and/or 
by identifying some pairs $L_i,L_j$ of the $6N$-gons to create handles. 

We claim that for every basic triarc $T_i$ such that the nucleus $L_i$ of $T_i$ is a $6N$-gon, and
\labtequ{link}{for every side $P$ of $T_i$, there is a set of $z$ pairwise disjoint $\pths{L_i}{P}$.} Indeed, recall that in order to construct $T_i$, we first surrounded $L_i$ by several pentagons and heptagons, $6N$ in total, to obtain a triarc $T_i^1$, then we performed the operation of the  right half of \fig{newhex} to obtain a triarc $T_i^2$, and finally we enlarged this into a larger triarc $T_i^3=T_i$ using the operation of \fig{glueing57} several times (this final step was described later, in the part of the current proof concerning large face-width). Now given any side $P'$ of $T_i^2$ it is possible to find, within $T_i^2$, a set of $z$ pairwise disjoint $\pths{L_i}{P'}$, see \fig{Ebelink}.
\showFigR{Ebelink}{Constructing $z$  disjoint $\pths{L_i}{P'}$, in the case that the auxilliary hexagons are replaced with
   42-gons ($6N$ for $N=7$). In light gray are
  the $(2,2,4)$-triarcs, in dark gray the \htile{}s resulting from the
  glueing operation of Figures~\ref{glueing57} and~\ref{2x2tile}. The
  empty triangle at the  bottom is a part of the triarc, isomorphic to the top-left and top-right ones. 
  As our paths do not use the bottom part, we don't show the details in the figure. The white
  triangular shape in the middle represents the nucleus.
  The 16 thick paths are the ones we need in order to prove that our graphs have large
  face-width.}{7.5cm}
Then, every time we use the operation of \fig{glueing57} while enlarging $T_i^2$ into $T_i^3$, it is possible to recursively propagate those paths to reach the side of $T_i^3$ corresponding to $P'$; if $P'$ is included within a side of $T_i^3$ then nothing needs to be done, and if not then we can propagate our paths through the parallelogram of \fig{glueing57} while keeping them disjoint (this is true even after performing the contract-uncontract operations of \fig{2x2tile}). This proves our claim \eqref{link}.

Next, we claim that any two nuclei $L_i, L_j$ can be joined by $z$ pairwise disjoint paths in $G'$. Indeed, this follows easily from \eqref{link} and the fact that whenever we glue two triarcs $T,T'$ together as in \fig{glueing57} by a parallelogram $R$ with side-lengths $m,n$, then we can find a set of $m$ pairwise disjoint paths within $R$ joining its two opposite sides of length $m$, as well as a set of $\min(m,n)$ pairwise disjoint paths within $R$ joining the sides of $T$ and $T'$ incident with $R$. 

We distinguish two cases. 
\paragraph{Case 1: the surface $S$ is orientable}  
There are three types of non-contractible cycles in $G$.
The first one comes from a path $P$ in $G'$ that connects two nuclei $L_i$, $L_j$ such that these
nuclei are glued to create a handle. As the distance from each nucleus to the boundary of the 
corresponding triarc $T_i$ is at least $z$, the length of $P$ is at least $z$ as well (even at least $2z$). 

The second type of non-contractible cycle $C$ comes from a cycle $C'$ in $G'$ such that 
$|C'| \le |C|$ and $C'$ separates some nucleus $L_i$ from some other nucleus $L_j$ in $G'$.
We use the above construction of $z$ pairwise disjoint paths from $L_i$ to $L_j$ to conclude that $|C|\geq z$ as 
desired (in fact, we have $|C|\geq 2z$ because the graph is cubic and so any two paths that have a common inner vertex must have a common edge).

The last type is similar to the second one: it is a cycle $C$  that crosses some cycle $L$ of $G$ obtained by glueing two nuclei  $L_i$, $L_j$ to introduce a handle. Such a cycle $C$ comes from a $k$-tuple of paths in $G'$, where $k$ is the number of times that $C$ crosses $L$, half of which paths have ends on $L_i$, and the other half of them on $L_j$.
We may assume that none of these paths $P$ leaves the triarc containing the endpoints of $P$, for otherwise $|P|\geq z$ holds.
We will consider again the $z$ pairwise disjoint paths connecting $L_i$ to $L_j$. 
In fact, we only need to consider their parts that are contained in the triarcs $T_i$, $T_j$: 
Let these parts be $P_{i,1}$, \dots, $P_{i,z}$ (connecting $L_i$ to the boundary of $T_i$)
and $P_{j,1}$, \dots, $P_{j,z}$ (connecting $L_j$ to the boundary of $T_j$).
As the paths $P_{i,t}$ start regularly along two thirds of the nucleus $L_i$
and the same holds for $L_j$, we can use them to create $z/2$ pairwise disjoint paths $Q_1$, \dots, $Q_{z/2}$ in $G$ connecting
the boundary of $T_i$ to the boundary of $T_j$. Note that the part of the surface $S$ containing $T_i$
and $T_j$ is a cylinder, and the cycle $C$ goes around this cylinder. Thus $C$   must intersect all of the paths $Q_t$. As the graph is cubic, each intersection with a path has to use at least two vertices, proving again that $|C| \ge z$. 

\paragraph{Case 2: $S$ is non-orientable}  
In this case a non-contractible cycle $C$ in $G$ will either yield a cycle $C'$ as above, in which case the same
argument applies, or it will yield a path $P'$ in $G'$ whose endpoints were identified when introducing crosscaps. Recall that we made every basic
triarc used in the construction large enough that the distance from its nucleus to the boundary of the triarc is at least $z$, thus $P'$ is, \obda,
contained within one of the triarcs in which a crosscap was introduced. With the help of \fig{Ebelink} and \fig{crosscap} (modified with a $6N$-gon
replacing the hexagon as described above) it is now not hard to see  that $|P'|\geq z$ as desired.

\comment{
Let $C$ be a shortest non-contractible cycle and suppose its length is less than $z$.
If $C$ intersects some nucleus $L_i$, then it cannot intersect $B_i$ or any other triarc
boundary $B_j$ ($1\le j\le m$) since the distance from $L_i$ to $B_j$ is at least $z$.
In such a case, $C$ would either be contained in a single triarc or in two
triarcs whose cycles have been used to make a handle in the surface. In other words,
$C$ would either be a one-sided cycle in one of the crosscaps or a cycle homotopic to
$L_i$ in the identified union of $L_i$ and $L_j$. Since $|L_i|=6N\ge 6z$, it is easy
to see that this is not possible. 

Thus $C$ intersects no nucleus. 
Then $C$ was already present 
before we created the crosscaps and added the handles. Let us consider a segment $D$ of $C$
whose endvertices both lie in some $B_i$ but all intermediate vertices lie inside
the corresponding basic triarc $T_i$. Then $D$ can be replaced by a homotopic segment 
$\hat D$ contained in $B_i$ with $|\hat D| \le 3|D|$. To see this, recall that 
the part of a basic triarc surrounding the nucleus was obtained from a piece of hexagonal grid after performing the operation of \fig{2x2tile} to disjoint four-hexagon blocks. Our path $D$ can be canonically transformed into a path $D'$ in that original grid with $|D'|\leq \frac{3}{2}|D|$, and there it is easy to find a shortcut $\hat D$ for $D'$ along the boundary with length $|\hat D| \le 2 |D'| \leq 3|D|$ as claimed.
Similarly, if  $D$ is a segment of $C$
whose endvertices both lie in the boundary of some parallelogram $P_i$, then a similar shortcut $\hat D$ can be found with $|\hat D| \le 3|D|$ that is contained in the boundary of $P_i$ and thus in $\bigcup B_i$.

By replacing  every such segment $D$ of $C$ with $\hat D$, we turn
$C$ into a homotopic closed walk $\hat C$ of length at most $3z$. We may assume that $\hat C$ does not meet $X$, since all of $X$ can be contracted to a point in $S$. If $\hat C$
traverses only a part of a side of some $T_i$ or $P_i$, then it needs to trace 
the same part of that side back, because by construction we only joined sides of basic triarcs and parallelograms $P_i$ at their endpoints. This part of $\hat C$ can therefore be cancelled out, yielding an even shorter homotopic (and thus non-contractible) closed walk.
However, every side of our triarcs has length at least $3z$, so $\hat C$ can
not use the entire side of a triarc. Thus $\hat C$ is contractible, and this contradiction completes the proof.
}
\end{proof}

\section{Other neutral sequences} \label{other}

In this paper we concentrated on the neutral sequence $(0,0,1,0,1)$, but we believe that our methods and results apply in a much more general setting ---see also \Sr{outl}--- and it is the purpose of this section to explain this.

In \Sr{main} we showed that every plausible sequence can be extended into a realizable one by adding pentagons and heptagons only. In what follows we are going to give a rough sketch of a proof that an arbitrary neutral sequence $s$ can be used to extend any plausible sequence into a realizable one under the assumption that a couple of basic building blocks can be constructed using precisely the faces that appear in some multiple of $s$. We expect that our construction will help yield more general results in the future, by showing that these building blocks can indeed be constructed.

So let $p=(p_3,p_4,\ldots,p_r)$ be a plausible sequence for the sphere or the torus, and let $s=(p'_3,p'_4,\ldots,p'_t)$ be a neutral sequence. In order to prove that there is some $n$ so that $p+ns$ is realizable, it suffices to find some $k\in \N$ for which it is possible to construct the following building blocks using precisely the faces that appear in some multiple of $s$:
\begin{enumerate}
\item \label{ii} a $(k,k,k)$-triarc;
\item \label{iii} a $(k,k,k-1)$-triarc;
\item \label{i} for every non-zero entry $p_l$ in $p$, a triarc containing a face of size $l$, such that the length of two of the sides of this triarc is
a multiple of $k$;
\item \label{iv} a ``ring'' like the one in \fig{cyl57} (using the faces from $s$ in the right proportion rather than pentagons and heptagons) for combining two
equilateral triarcs.
\end{enumerate}

\showFigR{2triarcs}{Constructing a parallelogram out 
of two $(k,k,k-1)$-triarcs.}{10.5cm}

Indeed, to begin with, construct a parallelogram with all sides of length $k$ out of two $(k,k,k-1)$-triarcs (supplied by~\ref{iii}) as shown in \fig{2triarcs}. 
(In figures explaining our construction, we shall use triarcs made of hexagonal
faces, but this is for illustration purposes only; in fact they have to be made of multiples of $s$.)
This also allows us to construct any parallelogram with dimensions $mk,lk$ for every $m,l\in \N$.

Next, similarly to the construction in \Tr{ebe57}, construct a `basic' triarc as in \ref{i} for each face-size $l$ for which $p_l\neq 0$; in fact, we construct $p_l$ copies of this basic triarc \fe\ $l$. Then, using the parallelograms we constructed earlier, we recursively glue all those triarcs together into a single triarc $T$, in a manner very similar to the operation of \fig{glueing57}.




By recursively glueing the resulting triarc with a $(k,k,k-1)$-triarc provided 
by \ref{iii} using the glueing operation of \fig{glueing57}, we can transform $T$ into an equilateral $(mk,mk,mk)$-triarc $T'$  for some (large) $m\in \N$.

Using the glueing operation of \fig{glueing57} it is possible to construct a triarc $R$ with the same side-lengths as $T'$, using only $(k,k,k)$-triarcs  (provided by \ref{ii})  and the above parallelograms.

In the case of the sphere, we can combine $R$ and $T'$ by using the ``ring''
provided by \ref{iv} to complete the construction.

\showFigR{cylinder}{Glueing $R$ and $T'$ together. The black dots depict the faces imposed by the sequence $p$.}{6.4cm}

If the underlying surface $S$ is the torus, we
glue $R$ and $T'$ together along one of their sides to obtain a parallelogram, and glue two opposite sides of this parallelogram together to obtain a cylinder $C$ both of whose bounding cycles are in-out alternating cycles of length $mk$, see \fig{cylinder}. 
We then glue the two bounding cycles of $C$ together to obtain a realization of a torus. 

If $p$ is plausible for some other surface $S$, then we would need additional gadgets like those used in the proof of Corollary~\ref{cor}.


\section{Outlook} \label{outl}

Trying to achieve a better understanding of the implications of Euler's formula, we studied the question of whether, given a plausible sequence $p$, and a neutral sequence $q$, it is possible to combine $p$ and $q$ into a realizable sequence $p+nq$, but we did so in very restricted cases. The general problem remains wide open; in particular, we would be interested to see an answer to the following problem.

\begin{problem}
 Given a closed surface $S$, is it true that for every plausible sequence $p$ for $S$, and every neutral sequence $q$, there is an $\nin$ such that $p+nq$ is realizable in $S$ with the exception of only finitely many pairs $(p,q)$? 
\end{problem}
(As mentioned in the introduction, if $S$ is the torus then the list of exceptional pairs $(p,q)$  is not empty.)

\end{document}